\newcommand{\R}{\ensuremath{\mathbb{R}}}
\renewcommand{\div}{\mathop{\rm Div}\nolimits}
\newcommand{\Mod}{\mathop{\rm Mod}}
\newcommand{\CC}{C}
\newcommand{\V}{l}
\newcommand{\sdot}{\! \cdot \!}
\newcommand{\Ball}[2]{B_{#1} \!\left(#2\right)}
\newcommand{\ball}[1]{B_{#1} }
\theoremstyle{plain}
\newtheorem{theorem}{Theorem}[section]
\newtheorem{lemma}[theorem]{Lemma}
\numberwithin{equation}{section}
\theoremstyle{definition}
\newtheorem{remark}[theorem]{Remark}
\begin{document}

\title{Filling loops at infinity in the mapping class group}
\author[A. Abrams]{Aaron Abrams}
\address{
Aaron Abrams\\
Mathematics Department\\
Robinson Hall\\ 
Washington and Lee University\\
Lexington VA 24450}
\email{abrams.aaron@gmail.com}
\author[N. Brady]{Noel Brady}
\address{
Noel Brady\\
Department of Mathematics\\
University of Oklahoma\\
601 Elm Ave\\
Norman, OK 73019}
\email{nbrady@math.ou.edu}
\author[P. Dani]{Pallavi Dani}
\address{
Pallavi Dani\\
Department of Mathematics\\
Louisiana State University\\
Baton Rouge, LA 70803-4918}
\email{pdani@math.lsu.edu}
\author[M. Duchin]{Moon Duchin}
\address{
Moon Duchin\\
Department of Mathematics\\
Tufts University\\
503 Boston Ave.\\
Medford, MA 02155}
\email{moon.duchin@tufts.edu}
\author[R. Young]{Robert Young}
\address{
Robert Young\\
Department of Mathematics\\
University of Toronto\\
40 St.\ George St., Room 6290\\
Toronto, ON  M5S 2E4\\
Canada}
\email{ryoung@math.toronto.edu}
\date{\today}
\thanks{This work was supported by a {\sf SQuaRE} grant from the
  American Institute of Mathematics.  The second author and 
  the fourth author are partially supported by NSF grants DMS-0906962 and 
  DMS-0906086, respectively.
  The fifth author would like to
  thank New York University for its hospitality during the preparation
of this paper.  We thank the referee for helpful comments.}

\begin{abstract}
We study the Dehn function at infinity in the mapping class group,
finding a polynomial upper bound of degree four.  This is the same upper
bound that holds for arbitrary right-angled Artin groups.
\end{abstract}

\maketitle


Dehn functions quantify simple connectivity.  That is,
in a simply-connected space, every closed curve is the boundary of some disk; the Dehn function measures the area required to fill the curves of a given length.
The growth of the Dehn function is invariant under quasi-isometry, so one can define the Dehn function not just for spaces, but also for groups.  The Dehn function is not the only group invariant based on a filling problem; for example, one can also define the {\em Dehn function at 
infinity}, which is a quasi-isometry invariant that measures the difficulty of filling closed curves with disks that avoid a large ball.
The Dehn function at 
infinity is a special case ($k=1$) of the  {\em higher divergence functions} $\div^k$ that were defined 
for groups in \cite{abddy} and serve to quantify the connectivity at infinity.
In that paper we survey some results using the growth rates of $\div^k$ to detect geometric features of
groups and spaces.

The mapping class group of a surface has quadratic Dehn function because it is automatic, and an automatic structure provides a combing which can be used to shrink a curve to a point using no more area than is needed in a Euclidean space
(see~\cite{mosher, wordproc}).  In this note we study the Dehn function at infinity:
if we impose the additional condition 
that the filling of a loop avoid a large ball, must its area be much worse than quadratic?  
In~\cite[Theorem 6.1]{abddy} we addressed this question and its higher-dimensional analogs in the 
case of right-angled Artin groups (RAAGs), and we showed that loops can
be filled at infinity using area at most polynomial of degree four.   

Here we show that the same result holds in mapping class groups of surfaces of genus
$g\ge 5$, contributing to the growing literature comparing mapping class groups to RAAGs.
We use two key features of these mapping class groups:  first, they have 
presentations with short relators (due to Gervais~\cite{gervais}), and second, all abelian
subgroups are undistorted \cite{flm}.


\section{Notation}
In any space $X$ we denote by $\Ball rx$  the ball of radius $r$ centered at $x$.  
We will use $x_0$ to denote a basepoint and we often write $\ball r$ for $\Ball r{x_0}$.
Any object in $X$ that is disjoint from $\ball r$ is called \emph{$r$-avoidant} (or often simply
\emph{avoidant}).

As usual, for two functions $f,g:\R\to\R$ we write $f\preceq g$ if there is a constant $A>0$
such that for all $t\geq 0$,
$$f(t) \le Ag(At+A)+At+A.$$

\begin{remark}\label{trivialEuclidean}
Note that in Euclidean space $\R^d$, any two points on the sphere of radius $r$
can be joined by an $r$-avoidant path of length at most $\pi r$.
Also, it is an exercise that there exists a constant $c>0$ such that
any $r$-avoidant loop of length $l$ in any $\R^d$ ($d\ge3$) can be filled
with an $r$-avoidant disk of area at most $cl^2$.
\end{remark}


\section{The Gervais presentation} \label{sec:MCG}
For a topological surface $S=S_{g,b}$ (where $g$ is the genus and $b$ is the 
number of punctures/boundary components), we write $\Mod(S)$ for its group 
of orientation-preserving diffeomorphisms up to isotopy, or {\em mapping class group}.

Our strategy for bounding the area of an efficient $r$-avoidant filling in $\Mod(S)$ is based 
on the ideas developed in \cite{abddy} for RAAGs.  
We begin with an efficient but presumably non-avoidant filling and alter it, ``pushing" each 
original 2-cell to an avoidant 2-cell (i.e., replacing the former with the latter). 
These new cells are then patched together 
(still avoidantly) using commuting relations to form the new filling.  Careful control 
of the pushing process allows us to bound the number of $2$-cells in the new filling in terms of
the number of $2$-cells in the original filling.

In order to do this, we present $\Mod(S)$ as a quotient of a RAAG
whose generators are Dehn twists (which commute if the corresponding
curves are disjoint).  In a 2-complex for such a presentation, all cells are either
squares coming from the RAAG or among finitely many types of other cells coming from the additional
relators of the mapping class group.

Squares coming from commutation relations can be replaced by avoidant squares
in a straightforward manner:  we push them out radially
by post-multiplying with a high power of one of the two commuting letters.   
The effect of this is to translate along 
a standard ray in the $2$-complex.
For the other types of $2$-cells, we will be able to carry out a similar pushing operation
if we can find a common commuter for all of the letters in the corresponding relator.
That is, if $\sigma$ is a $2$-cell with boundary labelled by the word $w$, and $h$ is a generator which commutes with all letters in $w$, then 
post-multiplying by $h^R$ results in 
a translated copy of $\sigma$ that is far from $x_0$.
To employ this strategy, we would like a presentation in which every 
generator is a Dehn twist, and every 
relator $w$ has 
``small support'' in the following sense:  the curves corresponding to the letters appearing in $w$ are collectively supported on a subsurface $F$ such that some other generator has support disjoint from $F$. This generator therefore commutes with every letter in $w$, and can be used to push the corresponding cell as above.

\begin{figure}[ht]
\includegraphics[width=2.4in]{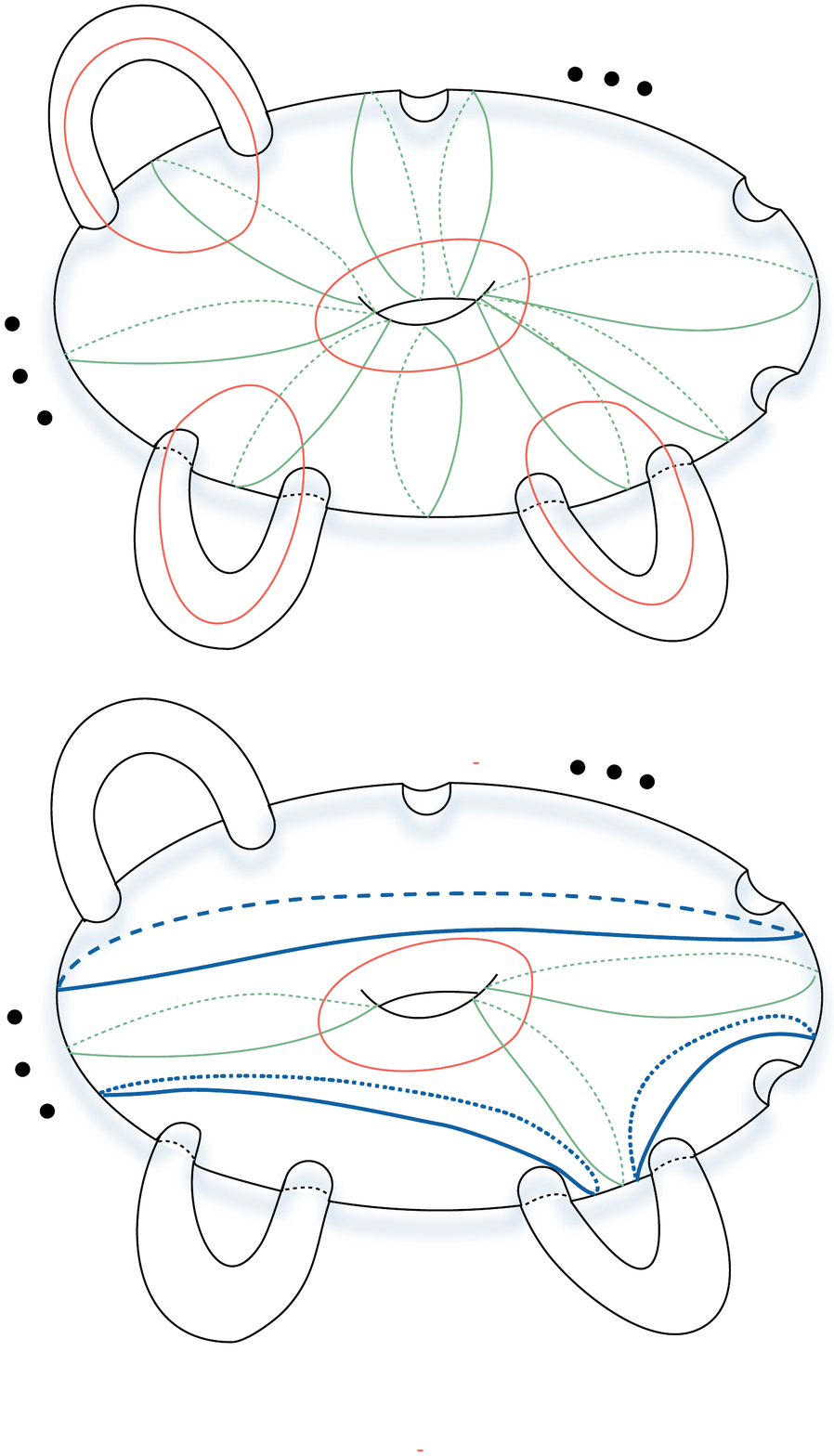}
\includegraphics[width=2.2in]{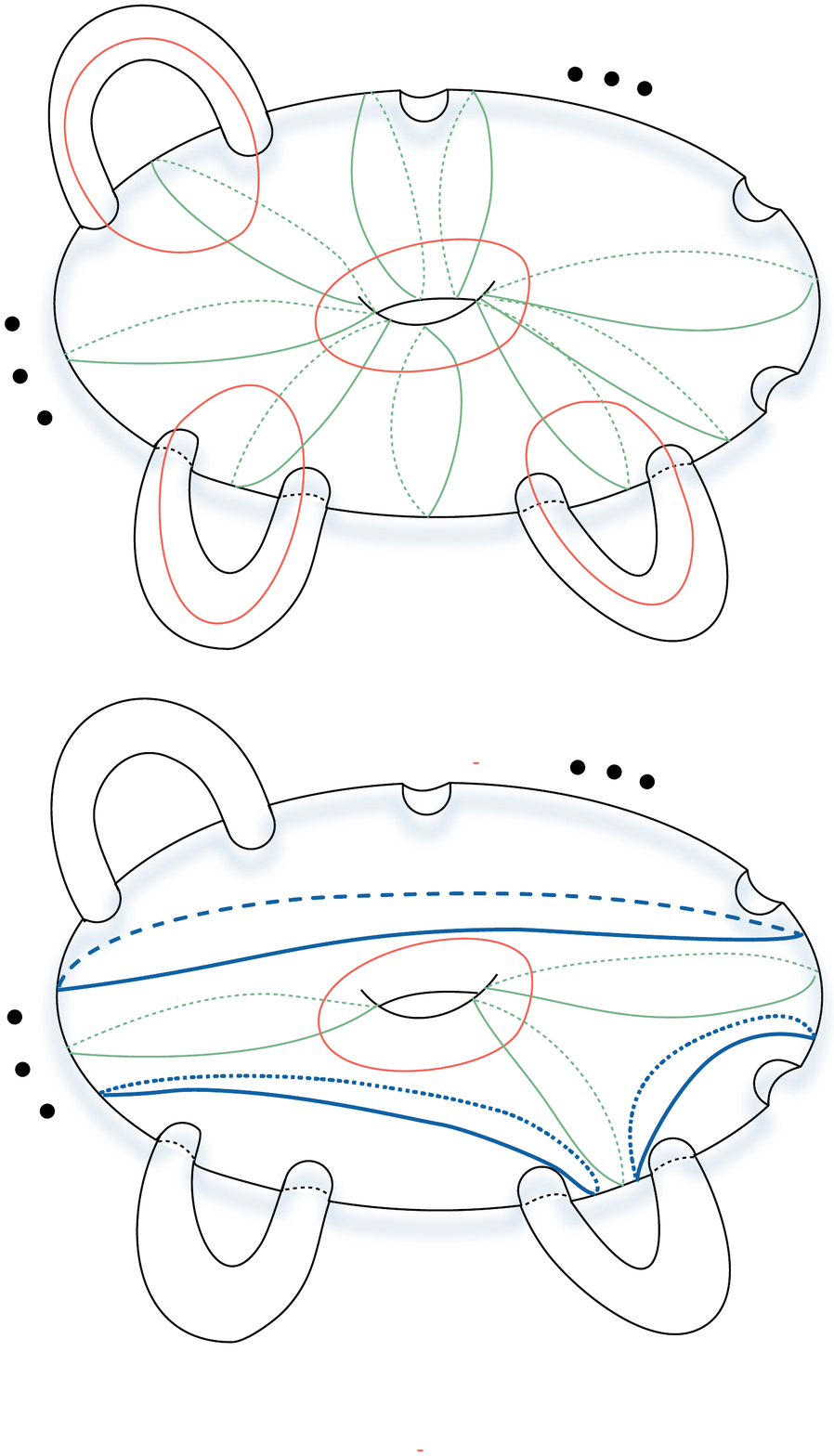}
\caption{A diagram of the Gervais curves on $S_{g,b}$.
On the left are $\alpha_1,\ldots,\alpha_{2g+b-2}$ (appearing as meridians of the central torus)
and $\beta_1,\ldots,\beta_g$ (with $\beta_1$ as the longitude of the central torus).
For every pair  $\alpha_i,\alpha_j$, there is a corresponding curve $\gamma_{ij}$;
the figure on the right shows three of the $\gamma_{ij}$.
The  star relations are formed by twists around 
seven curves in the configuration depicted on the right; the central curve $\beta_1$ is always
used.  Note that each such relation is supported on a three-times punctured torus.\label{g-curves}}
\end{figure}

The Gervais presentation of the mapping class group (see~\cite{gervais}) fits the bill:
this is a finite presentation in which every relator is supported on a small
subsurface (at most a three-times-punctured torus).  
Compared to better-known presentations (such as with Humphries or Lickorish generators), 
this will have the advantages provided by common commuters
to offset the disadvantage of having a far larger number of generators.

All the Gervais generators are Dehn twists 
supported on the collection of curves shown in 
Figure~\ref{g-curves}.  We call these the {\em Gervais curves}.  
The relations are of three kinds:  commuting, braid,
and so-called {\em star} relations. 
Commuting relations arise from twists around disjoint curves.  
The braid relations arise whenever two curves intersect once;
they have the form $ABA=BAB$.  The star relations arise when a collection of seven curves is 
in a particular topological configuration (see the right side of Figure~\ref{g-curves}); these have the form $(ABCD)^3=XYZ$.  
If two of the twisting curves for $A,B,C,D$ are isotopic (say $A$ and $B$), then there is a 
corresponding degenerate star relation (in this case $(A^2CD)^3=XY$).

For this presentation of $\Mod(S)$,
let $X$ be the universal cover of the presentation 2-complex, so that its 1-skeleton
is the Cayley graph.  Then all of the 2-cells are squares, hexagons, 14-gons, and 15-gons corresponding to 
the relators described above.  

The possible directions to push 2-cells, as well as the possible commuting 
relations used to patch avoidant $2$-cells together, are determined by a particular abelian subgroup of 
$\Mod(S)$ generated by Dehn twists around the set of curves described in  
the following lemma.

\begin{lemma}[Common commuters in the Gervais
  presentation]\label{lem:commuters}  Let $S$ be an orientable surface of genus $g\ge 5$
  and any number $b\ge 0$ of punctures.
There is a set $\mathcal{H}$ of $g$ mutually disjoint Gervais curves, whose associated Dehn twists 
generate a subgroup $H\le \Mod(S)$, with the following properties: 

\begin{enumerate}
\item For every relation in the Gervais presentation, there exists an element of 
$\mathcal{H}$ whose Dehn twist commutes with every letter appearing in the relation.
\item Any Gervais curve intersects at most two curves from $\mathcal{H}$.
\end{enumerate}
\end{lemma}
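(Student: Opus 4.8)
The plan is to take $\mathcal{H}=\{\beta_1,\dots,\beta_g\}$, the $g$ longitude curves of Figure~\ref{g-curves}. These are pairwise disjoint, since each $\beta_j$ is the longitude of a distinct handle, so their Dehn twists pairwise commute and $H$ is abelian (indeed free abelian of rank $g$). Because there are exactly $g$ of them, the cardinality requirement comes for free, and the whole argument reduces to verifying the two numbered properties for this explicit choice.

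For property (2) I would go curve-by-curve through the three families of Gervais curves and read intersection numbers directly off the configuration. The curves of $\mathcal{H}$ meet no member of $\mathcal{H}$ (they are mutually disjoint). Each meridian $\alpha_i$ crosses only the longitude(s) of the handle(s) it encircles, hence at most two of the $\beta_j$; and each connecting curve $\gamma_{ij}$, being supported near the two handles indexed by $i$ and $j$, likewise crosses at most two of the $\beta_j$. The content here is purely the explicit geometry of Figure~\ref{g-curves}, and the bound ``two'' is exactly what the configuration yields.

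For property (1) I would split into the three types of relators and, in each case, exhibit a $\beta_j$ disjoint from every curve appearing in the relator --- equivalently, a $\beta_j$ whose twist commutes with every letter. For a commuting relator $[A,B]$ or a braid relator $ABA=BAB$ only two curves appear, and by property (2) these meet at most $2+2=4$ of the $\beta_j$ in total; since $g\ge 5$ at least one $\beta_j$ is disjoint from both, and its twist is the desired common commuter. For a (possibly degenerate) star relator $(ABCD)^3=XYZ$ I would instead use that all seven curves are collectively supported on a single three-times-punctured torus $F$: a twist $t_{\beta_j}$ commutes with all seven letters as soon as $\beta_j$ can be isotoped off $F$, so it suffices to show that at most four of the $\beta_j$ meet $F$, after which $g\ge 5$ again guarantees a survivor.

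The main obstacle is this last count. Since a star relator involves seven curves, the per-curve bound from property (2) gives only the useless estimate $14$; the point is rather that the \emph{support} $F$ is a fixed small subsurface (genus one with three boundary curves) which, in the Gervais configuration, is crossed by only a bounded number of the longitudes --- namely the central $\beta_1$ together with the longitudes of the handles adjacent to $F$. Carrying out this bookkeeping carefully, including the degenerate stars in which two of $A,B,C,D$ are isotopic, and checking that the total never exceeds four, is the real work. It is also precisely where the hypothesis $g\ge 5$ enters, since the argument needs at least one handle whose longitude lies entirely outside the support of whichever relator is under consideration.
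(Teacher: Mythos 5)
Your proposal is correct and follows essentially the same route as the paper: take $\mathcal{H}$ to be all $g$ of the $\beta$-curves, verify property (2) by inspecting intersections of each $\alpha$- and $\gamma$-curve with the longitudes, and for property (1) bound the number of $\beta$-curves meeting the support of each relator (at most four for a star relator, namely the central $\beta_1$ plus at most three others dual to the $\alpha$-curves involved) so that $g\ge 5$ guarantees a disjoint one. The ``bookkeeping'' you flag as the remaining work is exactly the count the paper dispatches with ``one easily verifies,'' so you have identified the right claim at the same level of detail the authors use.
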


\begin{proof} 
Gervais gives his presentation in terms of three types of curves:  
$\alpha$-curves (separating out the topology),
$\beta$-curves (mutually disjoint curves, one around each handle, dual to some of the $\alpha$-curves), and 
$\gamma$-curves (derived from pairs of $\alpha$-curves).  (See Figure~\ref{g-curves}.)
Each star relation is supported on a three-times-punctured torus, and the relation 
involves three $\alpha$-curves, one $\beta$-curve, and three $\gamma$-curves.  
One easily verifies that the maximum number of other $\beta$-curves intersecting 
any of these support curves is three, if dual to the $\alpha$-curves.  Thus if 
there are at least five $\beta$-curves in total, one of them must be disjoint from the 
support curves.  In this case, the support of any commuting or braid relation clearly 
also misses some $\beta$-curve.  
The total number of $\beta$-curves is $g$, the genus.  

Each $\alpha$-curve and each $\gamma$-curve intersects at most two $\beta$-curves, by inspection.
Now let $\mathcal{H}$ be the set of all $\beta$-curves.
\end{proof}


\section{Filling loops at infinity}

Let $h_1, \dots h_g$ denote the Dehn twists about the $\beta$-curves,
and let $H$ denote the subgroup $\langle h_1, \dots h_g \rangle$.  The
group $H$ is abelian, since the $\beta$ curves are disjoint.  Abelian
subgroups of $\Mod(S)$ are undistorted (\cite{flm}).  That is, if $d$
is the word metric on $\Mod(S)$ with respect to the Gervais
presentation, and $d_H$ is the word metric on $H$, then there is a
$\CC>1$ such that
\begin{equation*}\label{eq:distortion}
  d(x,y) \le d_H(x,y) \le \CC  d(x,y) + \CC. \tag{{\sc und}}
\end{equation*}

This means that the intersection of $\ball{r}$ and any coset of $H$ is
small:
\begin{lemma}
  Let $H'\subset H$ be generated by a subset of the $\beta$-curves.
  Let $v\in \Mod(S)$ and let $y\in v\cdot H'$ be a point in $v\cdot H'$ such that
  $d(x_0,y)=d(x_0,v\cdot H')$.  If $\Ball ry$ denotes the $r$-ball in $v\cdot H'$
  centered at $y$ and $r\ge 1$, then 
  $$\Ball{r-d(x_0,v\cdot H')}y \subset \ball{r}\cap v\cdot H'\subset \Ball{3\CC r}y.$$
\end{lemma}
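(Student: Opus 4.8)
The plan is to reduce everything to a single metric comparison, after which both inclusions are triangle-inequality bookkeeping. Write $D := d(x_0, v\cdot H') = d(x_0,y)$, and let $d_{vH'}$ denote the intrinsic word metric on the coset $v\cdot H'$, so that the balls $\Ball{\rho}{y}$ appearing in the statement are $d_{vH'}$-balls while $\ball r = \Ball r{x_0}$ is an ambient $d$-ball. The key claim I would isolate is that for any two points $p,q \in v\cdot H'$,
$$d(p,q) \le d_{vH'}(p,q) \le \CC\, d(p,q) + \CC. \quad (\star)$$

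To establish $(\star)$ I would first use left-invariance of the word metric to translate by $v^{-1}$: writing $p = v h_p$ and $q = v h_q$ with $h_p, h_q \in H'$, we have $d(p,q) = d(h_p,h_q)$ and $d_{vH'}(p,q) = d_{H'}(h_p,h_q)$. Since $H$ is free abelian with the $\beta$-twists as a basis and $H'$ is generated by a subset of them, the word length of an element of $H'$ is the same whether measured in $H'$ or in $H$, so $d_{H'}(h_p,h_q) = d_H(h_p,h_q)$. Now $(\star)$ is exactly the undistortion inequality $(\textsc{und})$ applied to $h_p,h_q\in H$ and translated back by $v$. I expect this reduction to be the only real content: once the coset metric is identified with the restriction of $d_H$ and $(\textsc{und})$ is invoked, the rest is formal.

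For the first inclusion, take $z \in \Ball{r-D}{y}$, i.e.\ $z \in v\cdot H'$ with $d_{vH'}(y,z) \le r - D$. Then, using the lower bound in $(\star)$ in the middle step,
$$d(x_0,z) \le d(x_0,y) + d(y,z) \le D + d_{vH'}(y,z) \le D + (r-D) = r,$$
so $z \in \ball r \cap v\cdot H'$, as required.

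For the second inclusion, take $z \in \ball r \cap v\cdot H'$. Because $y$ realizes the distance from $x_0$ to the coset, $D = d(x_0, v\cdot H') \le d(x_0,z) \le r$, hence $d(y,z) \le D + d(x_0,z) \le 2r$. The upper bound in $(\star)$ then yields $d_{vH'}(y,z) \le \CC\, d(y,z) + \CC \le 2\CC r + \CC$, and since $r \ge 1$ we have $\CC \le \CC r$, so this is at most $3\CC r$; thus $z \in \Ball{3\CC r}{y}$. The only place where care is needed is this final constant-chasing: the hypothesis $r\ge 1$ is precisely what converts the additive $+\CC$ error coming from undistortion into the clean multiplicative bound $3\CC r$.
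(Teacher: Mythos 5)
Your proof is correct and follows essentially the same route as the paper's: the first inclusion via the triangle inequality and $d\le d_H$, the second by noting $d(x_0,y)\le d(x_0,z)\le r$, hence $d(y,z)\le 2r$, and then applying the undistortion inequality \eqref{eq:distortion} together with $r\ge 1$ to get $2\CC r+\CC\le 3\CC r$. The only difference is that you spell out the (correct) reduction identifying the coset metric $d_{v\cdot H'}$ with the restriction of $d_H$, a step the paper leaves implicit.
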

\begin{proof}
  The first inclusion follows from the triangle inequality and the
  fact that $d \le d_H$.  For the second, suppose that $z\in \ball{r}\cap
  v\cdot H'$.  Then by the definition of $y$, we
  have $d(x_0,y)\le r$, so $d(z,y)\le 2r$ and
  $$d_{v\cdot H'}(z,y)\le 2 \CC r  + \CC\le 3 \CC r. \qedhere$$
\end{proof}
So we can construct avoidant curves and disks in $X$ from avoidant
curves and disks in $H'$:

\begin{lemma}[Avoidance in cosets]\label{lemma:relative avoidance}
  Suppose that $H'\subset H$ is generated by a subset of the $\beta$-curves and that
  $H'$ has rank at least 3.  There is a constant $D>0$ depending only on $S$
  such that for any $v\in \Mod(S)$ and any $r\ge D$:
  \begin{enumerate}
  \item Let $x_1,x_2\in v\cdot H'$ be
    $r$-avoidant.  There is an $\frac rD$-avoidant path in $v\cdot
    {H'}$ from $x_1$ to $x_2$ which has length at most $D \sdot   d_{H'}(x_1,x_2)$.
  \item Let $\gamma$ be an $r$-avoidant curve in $v\cdot H'$ of length
    $l$.  There
    is an $\frac rD$-avoidant disk $f:D^2\to v\cdot H'$ which fills
    $\gamma$ and has area at most $D l^2$.
  \end{enumerate}
\end{lemma}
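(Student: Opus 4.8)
The plan is to transfer the Euclidean constructions of Remark~\ref{trivialEuclidean} into the coset $v\cdot H'$. By the undistortion estimate \eqref{eq:distortion}, the space $(v\cdot H',d_{H'})$ is bi-Lipschitz—after the standard identification of a coset of $H'\cong\Z^k$ with the integer lattice—to $\Z^k\subset\R^k$, where $k\ge3$ is the rank of $H'$; I fix such an identification $\phi$ sending the point $y$ of the preceding lemma to the origin, with bi-Lipschitz constant $L=L(S)$. The preceding lemma, applied with radius $\frac rD$ in place of $r$, tells me exactly which part of the coset I must avoid: any $z\in v\cdot H'$ with $d_{H'}(z,y)>\frac{3\CC r}{D}$ is automatically $\frac rD$-avoidant, so it suffices to build the path (resp.\ disk) so as to miss the intrinsic ball $\Ball{3\CC r/D}{y}$. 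Dually, the lower inclusion shows that every $r$-avoidant point of the coset satisfies $d_{H'}(z,y)\ge r-\rho$, where $\rho:=d(x_0,v\cdot H')$.

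I would then choose $D$ large (in terms of $\CC$, $L$, and $k$) and split on the size of $\rho$. If $\rho$ is comparable to $r$—precisely $\rho\ge r(1-3L^2\CC/D)$—then every point of $v\cdot H'$ lies at distance at least $\rho\ge r/D$ from $x_0$, so the whole coset is $\frac rD$-avoidant and there is nothing to route around. For (1) I take an intrinsic geodesic from $x_1$ to $x_2$, of length $d_{H'}(x_1,x_2)$; for (2) I fill $\gamma$ inside the coset using the quadratic Dehn function of $\Z^k$, getting area at most a constant times $l^2$. Both fillings are automatically $\frac rD$-avoidant.

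In the remaining case $\rho<r(1-3L^2\CC/D)$, the bound $d_{H'}(\cdot,y)\ge r-\rho$ places the endpoints $x_1,x_2$ (resp.\ the whole curve $\gamma$) at intrinsic distance exceeding $3L^2\CC r/D$ from $y$; the threshold is chosen exactly so that, under $\phi$, their images clear the Euclidean ball $B$ of radius $s:=3L\CC r/D$ about the origin, while $\phi$ still carries $\Ball{3\CC r/D}{y}$ into $B$. Now I work in $\R^k$. For (1) I use that the intrinsic path metric of the complement of $B$ is bi-Lipschitz to the restricted Euclidean metric—the extreme case being antipodal points of $\partial B$, where the ratio is $\pi/2$—so I can join $\phi(x_1)$ to $\phi(x_2)$ by a $B$-avoidant path whose length is at most a constant times $|\phi(x_1)-\phi(x_2)|$, hence at most a constant times $d_{H'}(x_1,x_2)$. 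For (2), since $k\ge3$, the second part of Remark~\ref{trivialEuclidean} fills $\phi\circ\gamma$ by a $B$-avoidant Euclidean disk of area at most a constant times $l^2$. In each case I pull the Euclidean object back through $\phi$ to a path or disk in $v\cdot H'$; because the object avoids $B$ and $\phi(\Ball{3\CC r/D}{y})\subset B$, the pullback misses $\Ball{3\CC r/D}{y}$ and is therefore $\frac rD$-avoidant, while the length and area bounds survive the bi-Lipschitz pullback (with $D$ absorbing all constants).

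The two hypotheses enter precisely here: rank $\ge3$ is exactly what lets Remark~\ref{trivialEuclidean} produce an avoidant \emph{disk}, since a loop enclosing the origin admits no avoidant filling in $\R^2$, and the requirement $r\ge D$ ensures that the additive errors in \eqref{eq:distortion} and in the lattice-versus-$\R^k$ comparison are dominated by the multiplicative terms, so that the sandwiching and threshold inequalities hold as stated. I expect the main obstacle to be the transfer step, since $\phi$ is only a quasi-isometry and not a homeomorphism: I must check that pulling a continuous Euclidean disk back to a genuine combinatorial disk in $\Z^k$ preserves the quadratic area bound, and that—via the preceding lemma—the forbidden intrinsic ball and the forbidden Euclidean ball $B$ correspond closely enough that avoidance is genuinely preserved in both directions.
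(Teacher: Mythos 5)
Your proposal is correct and follows essentially the same route as the paper: reduce, via the preceding sandwich lemma, the condition of avoiding $\ball{r/D}$ to that of avoiding an intrinsic ball about $y$ in the flat coset, split off the trivial case where the coset is far from $x_0$, and then invoke the Euclidean constructions of Remark~\ref{trivialEuclidean} in $\Z^k\cong\R^k$ with $k\ge 3$. You are somewhat more explicit than the paper about the bi-Lipschitz transfer and the discretization of the Euclidean path and disk, but the underlying argument is the same.
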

\begin{proof}
  Let $D=5\CC$.  If $d(x_0,v\cdot H')> \frac rD$, the statements are trivial,
  since the $\frac rD$-ball doesn't intersect $v\cdot H'$.  

  Otherwise, by the lemma above, there is a $y\in v\cdot H'$ such that
  $$\Ball{\frac {4r}5 }y  \subset \ball{r}\cap v\cdot H'$$
  and 
  $$\ball{\frac rD} \cap v\cdot H' \subset \Ball{\frac {3r}5 }y$$
  Since $x_1$, $x_2$, and $\gamma$ are $r$-avoidant, they are outside
  $\Ball{\frac {4r}5 }y$. 
  Since $H'$ has rank at least 3, by
  Remark~\ref{trivialEuclidean} there is a
  curve from $x_1$ to $x_2$ as well as a disk filling $\gamma$ which both avoid
  $\Ball{\frac {3r}5 }y$; consequently, this curve and disk are
  $\frac rD$-avoidant in $\Mod(S)$.
\end{proof}

Thus we can construct avoidant fillings of loops that live in flat cosets; 
we will use these to build avoidant fillings of arbitrary loops.
 
\begin{theorem}[Filling loops at infinity in the mapping class group]\label{mcgfill}
Suppose $S$ has genus at least $5$ and any number of punctures, and let $X$
  be the Cayley 2-complex of $\Mod(S)$.  There is a constant $c>0$ such that for
  any $r$, any $r$-avoidant loop of length $\V$ has an $\frac rc$-avoidant
  filling of area $\le c r^2 \V^2$.
\end{theorem}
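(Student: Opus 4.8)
The plan is to start from an efficient (but non-avoidant) filling of $\gamma$ and to rebuild it piece by piece out in the avoidant region, using the abelian cosets supplied by Lemma~\ref{lem:commuters} as the arena in which all the new work is done. Since $\Mod(S)$ is automatic its Dehn function is quadratic, so first I would fix a van Kampen diagram $\Delta$ (a planar disk) for $\gamma$ with at most $K\V^2$ two-cells, for a constant $K$ depending only on $S$. As every relator has perimeter at most $15$, the diagram $\Delta$ also has $O(\V^2)$ edges and its incidences satisfy $\sum_v m_v=O(\V^2)$, where $m_v$ is the number of cells meeting the vertex $v$. Set $R=\lceil 4\CC r\rceil$. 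The new filling $\Delta'$ will consist of three kinds of pieces: a single \emph{pushed copy} of each $2$-cell of $\Delta$, a thin \emph{strip} replacing each edge of $\Delta$, and a \emph{vertex patch} filling in the gap around each vertex of $\Delta$.

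\emph{Pushing the cells.} Let $\sigma$ be a $2$-cell of $\Delta$, based at a vertex $g$ with boundary word $w$, and let $h\in\mathcal{H}$ be a common commuter for $w$ as in Lemma~\ref{lem:commuters}(1). Because $h$ commutes with every letter of $w$, right multiplication by $h^{\pm R}$ carries $\sigma$ to a genuine relator cell $\sigma h^{\pm R}$ of $X$; moreover every vertex $gu$ of $\sigma$ (with $u$ a prefix of $w$, so $|u|\le 15$) satisfies $gu\,h^{\pm R}=g\,h^{\pm R}u$, so the whole cell stays within distance $15$ of the single point $g\,h^{\pm R}$. Working in the coset $gH$ and using the preceding lemma, at least one of $g\,h^{R},\ g\,h^{-R}$ lies at $d_{gH}$-distance more than $3\CC r$ from the point of $gH$ nearest $x_0$, hence is $r$-avoidant; I choose the corresponding sign $\epsilon_\sigma$ and set $\sigma'=\sigma h^{\epsilon_\sigma R}$, which is then $\Omega(r)$-avoidant. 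This is the delicate point of the argument: the push must be \emph{outward}, so the sign has to be chosen per cell, and it is precisely the fact that $h$ commutes with all of $w$ that lets one choice of sign move the entire (small) cell out together.

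\emph{Patching.} I must now reconnect the pushed cells avoidantly. For an edge $e=(v,vs)$ of $\Delta$ shared by cells $\sigma_1,\sigma_2$ (with commuters $h_1,h_2$), both $h_1,h_2$ commute with $s$, so they lie in the subgroup $H_s\le H$ generated by the $\beta$-twists commuting with $s$; by Lemma~\ref{lem:commuters}(2) the curve $s$ meets at most two $\beta$-curves, so $H_s$ has rank at least $g-2\ge 3$. The two pushed copies of $v$ both lie in the coset $vH_s$ and are $r$-avoidant, so Lemma~\ref{lemma:relative avoidance}(1) supplies an $\frac rD$-avoidant path $q$ between them of length $O(r)$; copying the \emph{word} $q$ at the other endpoint $vs$ (legitimate since $s$ commutes with $H_s$) and filling the resulting rectangle with commutation squares yields an avoidant strip joining the two pushed copies of $e$, of area $O(r)$. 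The crucial trick is that the strip is built over the avoidant route $q$ rather than over the straight segment $h^{\epsilon R}$, so it never dips into the ball. Around each vertex $v$ the $v$-sides of the strips of the incident edges form a closed loop in the coset $vH$ (rank $g\ge 3$) of length $O(m_v\,r)$; I fill it by coning to one of its vertices, expressing it as $O(m_v)$ triangles each of perimeter $O(r)$, and fill each triangle by Lemma~\ref{lemma:relative avoidance}(2) with an $\frac rD$-avoidant disk of area $O(r^2)$. At a boundary vertex the same loop closes up through the point $v\in\gamma$ itself, which is what keeps $\partial\Delta'=\gamma$.

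Putting the pieces together, $\Delta'$ is an $\frac rc$-avoidant disk with boundary $\gamma$. For the area: the pushed cells contribute $O(\V^2)$, the edge strips contribute $O(r)\cdot O(\V^2)=O(r\V^2)$, and the vertex patches contribute $\sum_v O(m_v r^2)=O\!\left(r^2\sum_v m_v\right)=O(r^2\V^2)$; the last term dominates and gives the bound $\le c\,r^2\V^2$. It is the coning step that keeps the vertex patches linear in $m_v$: filling each vertex loop directly would cost $O(m_v^2 r^2)$, and $\sum_v m_v^2$ is not controlled by $O(\V^2)$. The main obstacle throughout is the avoidance bookkeeping—guaranteeing that every pushed cell, strip, and patch stays outside $\ball{r/c}$—and this is handled uniformly by performing all reconnection inside the flat cosets $vH_s$ and $vH$ and invoking Lemma~\ref{lemma:relative avoidance}.
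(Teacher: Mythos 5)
Your proposal is correct and follows essentially the same route as the paper: push each cell of a quadratic-area diagram along a common commuter from the Gervais presentation, reconnect with commutator strips built over avoidant paths in the flat cosets $v\cdot H_f$, triangulate and fill the vertex polygons inside $v\cdot H$, and sum to get $\preceq r^2\V^2$. The only (cosmetic) difference is your per-cell sign choice for the push: the paper takes $R=(2r+30)\CC+\CC$, so undistortedness already forces the displacement to exceed twice the starting distance from $x_0$ and either direction works.
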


\begin{proof}
We start with an $r$-avoidant loop of length $\V$ in $X$.  Since the Dehn function is 
quadratic,
there exists a (not necessarily avoidant) filling $\Delta$ with area $\preceq \V^2$.  
We use $\Delta$ as a combinatorial model for an avoidant filling of the same loop.  The new filling is obtained by making the 
following replacements, which are depicted in Figures~\ref{fig:mcg-avoidant1}-\ref{fig:mcg-avoidant2} and 
are described more precisely below.

\begin{enumerate}
\item[{\em Step 1}] Each $2$-cell of $\Delta$ is replaced by (``pushed
  to") an avoidant copy of itself. 

\item[{\em Step 2}] Each edge  of $\Delta$ is replaced by 
a (possibly degenerate)
avoidant strip of squares of length $\preceq r$.  
An edge belonging to two $2$-cells is replaced by a strip connecting the two pushed copies of the cells.  An edge belonging to a 
single $2$-cell is necessarily part of the boundary loop, and is extended to a strip connecting the edge to the pushed copy of the 
cell.  

\item[{\em Step 3}] The result of the previous 
steps is topologically a punctured disk, with one boundary component equal to the original loop
and an additional boundary component corresponding to each vertex in 
$\Delta$.  Each boundary component of the latter kind is filled by an avoidant disk in an appropriate flat.
\end{enumerate}

\begin{figure}
\centering
\def\svgwidth{4in} 
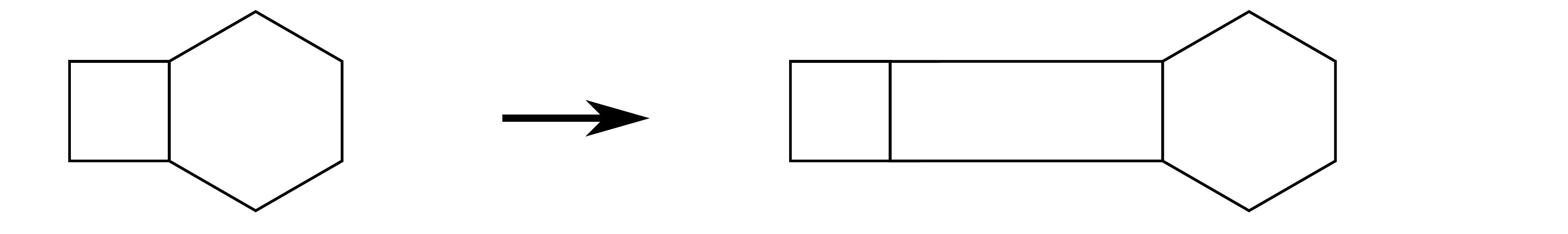 
\caption{\label{fig:mcg-avoidant1} The cells $\sigma_i'$ are obtained by pushing the
    $\sigma_i$ out of the ball of radius $r$.  The path $\gamma$ is
    $r/D$-avoidant, and the letters in the corresponding word all
    commute with $f$. }
\end{figure}


\begin{figure}
\centering
\def\svgwidth{4in} 
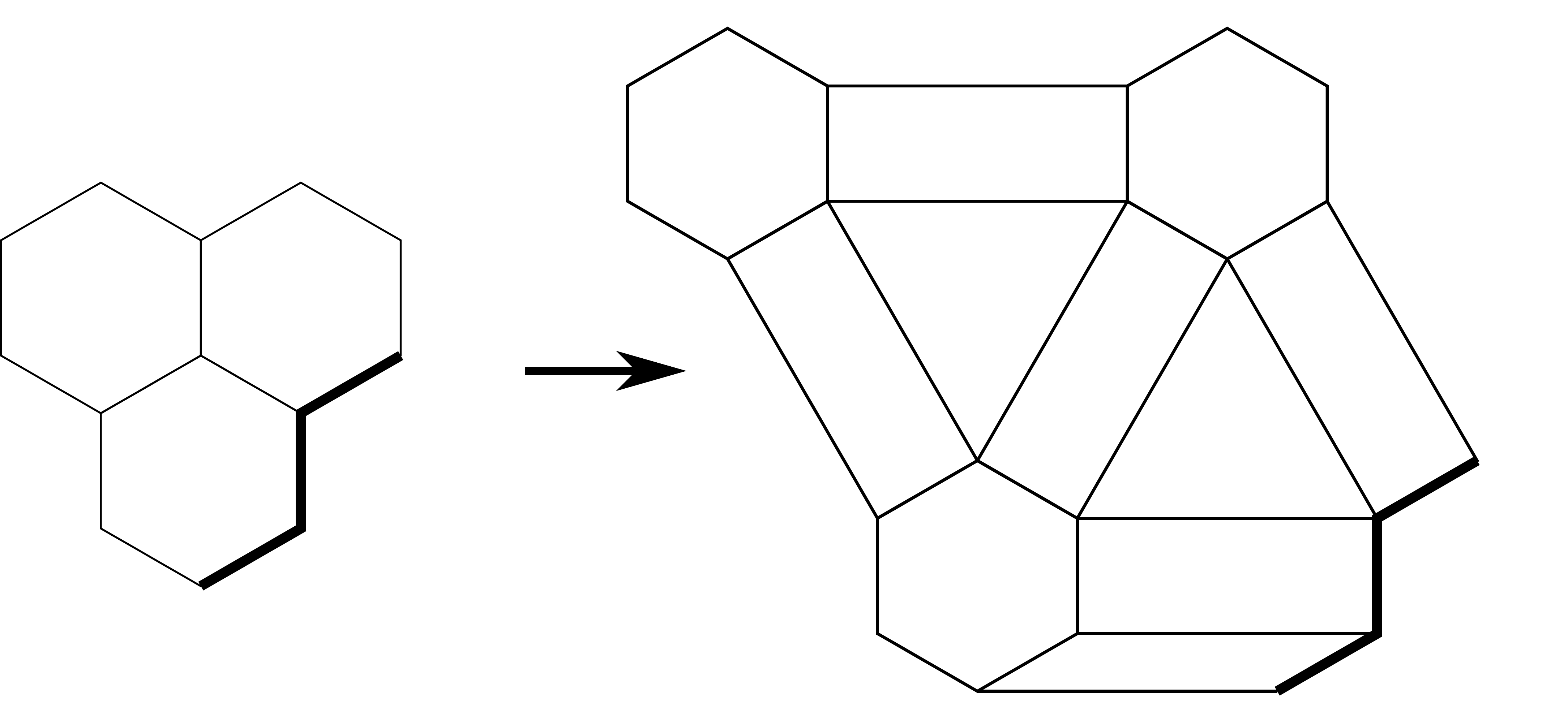 
\caption{\label{fig:mcg-avoidant2}The edges in bold in this figure are part of the boundary
    loop of $\Delta$.  Each strip is $(\frac rD-1)$-avoidant and has length
    $\preceq r$.}
\end{figure}


{\em Step 1:  Pushing $2$-cells.}  We replace each 2-cell $\sigma$ with an avoidant cell
$\sigma'$.  If $\sigma$ is
already $r$-avoidant, we let $\sigma'=\sigma$.  Otherwise, $\sigma$ is
partially contained in the ball of radius $r$.  It corresponds to a
relation in the Gervais presentation, and we choose a common commuter
$h_\sigma$ for the generators in this relation, whose existence is guaranteed by
Lemma~\ref{lem:commuters}.

Let $R=(2r+30)\CC+\CC$.  If the vertices of $\sigma$ are $v_1, \dots
v_k$, then $v_1h_\sigma^{R}, \dots, v_kh_\sigma^{R}$ are the vertices
of a copy of $\sigma$ (i.e., an isometric $2$-cell), 
since $h_\sigma$ commutes with the edge labels
of $\sigma$.  Denote this copy by $\sigma'$.  Since $\sigma$ is
partially contained in the ball of radius $r$ and relators have length at most $15$, 
$\sigma'$ is entirely outside the ball of radius $r+15$.  Thus it is $r$-avoidant by
undistortedness~\eqref{eq:distortion}.

{\em Step 2: Connecting pushed cells with strips.}  Consider an edge in $\Delta$ with
vertices $v$ and $w$, labeled by a Gervais generator $f$.  Let
$H_f\subset H$ be the subgroup of $H$ generated by the generators of
$H$ which commute with $f$; this has rank at least $g-2$, where $g$ is
the genus of $S$, by Lemma \ref{lem:commuters}(2).  

First, we find vertices $v_1$ and $v_2$ corresponding to $v$ in the
pushed filling.  If the edge is shared by two $2$-cells $\sigma_1$ and
$\sigma_2$, let $v_1,v_2$ be the vertices of $\sigma_1'$ and
$\sigma_2'$ which correspond to $v$.  Otherwise, the edge belongs to the  boundary
of $\Delta$.  If it is adjacent to a $2$-cell $\sigma$, let $v_1=v$
and let $v_2$ be the vertex in $\sigma'$ corresponding to $v$.
Otherwise, the edge is used twice in the boundary of $\Delta$, and  we can let
$v_1=v_2=v$.  In any case, $v_1$ and $v_2$ are $r$-avoidant and are
both contained in $v\cdot H_f$, and $d_{H_f}(v_1,v_2)\preceq r$.  By
Lemma~\ref{lemma:relative avoidance}, there is a $\frac rD$-avoidant path
$\gamma$ in $v\cdot H_f$ which connects $v_1$ to $v_2$; we can
interpret this as a word representing $v_1^{-1}v_2$ whose letters all
commute with $f$.  Then there is a strip built out of squares (that
is, commuting relations) whose boundary label is the commutator $[f,
\gamma]$; this strip is $(\frac rD-1)$-avoidant and has length $\preceq r$.

{\em Step 3: Filling in the holes.}  The partial filling constructed above has
one boundary component for each vertex of $\Delta$ which is
sufficiently close (within distance $r+15$) from the basepoint.  Each boundary
component is a polygonal loop whose sides are paths $\gamma$ belonging to strips from the previous step 
(these appear as triangles in Figure~\ref{fig:mcg-avoidant2}).  The number of sides
of the polygon associated to $v$ is the number of edges incident to
$v$ in $\Delta$.  Each vertex is $r$-avoidant, and each side is an
$(\frac rD-1)$-avoidant curve of length $\preceq r$.  Indeed, each vertex is
distance at most $R$ away from $v$, so any two vertices are distance
$\le 2R$ apart.  The entire polygon is contained in the coset $v\cdot
H$.

To fill these polygonal loops, we first subdivide each into triangular
loops by adding additional $\frac rD$-avoidant curves in $v\cdot H$ between
the vertices; these exist by Lemma \ref{lemma:relative avoidance}.
The resulting triangular loops are $(\frac rD-1)$-avoidant and have length
$\preceq r$.  Again by Lemma~\ref{lemma:relative avoidance}, they can
be filled by $(\frac r{D^2}-1)$-avoidant disks in $v\cdot H$ of area $\preceq
r^2$.  Let $\rho=\frac 1{2D^2}$, so that when $r$ is sufficiently large,
$\frac r{D^2}-1\ge \rho r$.

The union of the pushed cells, strips and filled triangles above is a
$\rho r$-avoidant filling of the 
boundary loop;
it remains to estimate the area of this filling.  Since the area (the number of $2$-cells) 
of $\Delta$ is $\preceq \V^2$, the number of vertices and edges in $\Delta$ is $\preceq \V^2$ 
as well.  
Each strip introduced in this construction has length (and area)
$\preceq r$.  
Because the triangular loops lie in cosets, 
there is a constant $M$ such that the area of any of the triangle fillings above is at most $Mr^2$.  
Moreover, the number of triangles is certainly bounded above by twice the number of edges in 
$\Delta$.  So the total area of the new filling is $\preceq \V^2 +
\V^2 (2r) + \V^2 (Mr^2)\preceq r^2\V^2$, as desired.
\end{proof}

In the language of \cite{abddy}, which primarily considers the case
that $l\sim r$, this implies that $\div^1(\Mod(S))\preceq r^4$.  By considering the lower bound from  Euclidean filling area, 
we have $r^2 \preceq \div^1(\Mod(S))$.  
In fact there is some evidence that the true answer is $\div^1(\Mod(S))\sim r^3$;  
good candidates for hard-to-fill loops are found in the $2$-flats generated by one Dehn twist
and one other mapping class that is pseudo-Anosov on the complementary subsurface.  
In general, to approach higher divergence
in mapping class groups, it is natural to focus on flats generated by many partial pseudo-Anosovs, because
axes of pseudo-Anosovs are strongly contracting \cite[Theorem 4.2]{duchin-rafi}, so it requires high volume to project down 
from far away to a 
given size in a grid made up by such axes.

\bibliographystyle{amsplain}
\bibliography{divk}

\end{document}